%
%
%
%-------------------------------------------------------------------------------------------------------
%           Analytic approach to solve a degenerate PDE for the Heston model
%
%
%                                                04 giugno 2014
%-------------------------------------------------------------------------------------------------------
%
%
\documentclass[a4,11pt]{amsart}

\textheight 8in \textwidth 6in \voffset -0.3in \hoffset -0.4in

\usepackage{latexsym}
\usepackage{amsmath}
\usepackage{amssymb}
\usepackage{amsthm}
\usepackage[mathcal]{eucal}

\usepackage{enumerate}
\allowdisplaybreaks

 \makeatletter \@addtoreset{equation}{section}

\makeatother \makeatletter

\newtheorem{theorem}{Theorem}[section]

\newtheorem{lemma}[theorem]{Lemma}
\newtheorem{proposition}[theorem]{Proposition}
\theoremstyle{definition}

\newtheorem{remark}[theorem]{Remark}

\newcommand{\R}{{\mathbb R}}

%%%%%%%%%%%%%%%%%%%%%%%%%%%%%%%%%%%  equazioni %%%%%%%%%%%%%%%%%%%%%%%%%%%%%%%%%%%%%%%%%%%%%%%%%%%%%
\newcommand{\bd}{\begin{defi}}
\newcommand{\ed}{\end{defi}}
\newcommand{\nnm}{\nonumber}
\newcommand{\be}{\begin{equation}}
\newcommand{\ee}{\end{equation}}
\newcommand{\barr}{\begin{array}}
\newcommand{\earr}{\end{array}}
\newcommand{\bmn}{\begin{eqnarray}}
\newcommand{\emn}{\end{eqnarray}}
\newcommand{\bnm}{\begin{eqnarray*}}
\newcommand{\enm}{\end{eqnarray*}}
\newcommand{\bln}{\begin{subequations}}
\newcommand{\eln}{\end{subequations}}

\newcommand{\ba}{\begin{align}}
\newcommand{\ea}{\end{align}}
\newcommand{\banm}{\begin{align*}}
\newcommand{\eanm}{\end{align*}}

\newcommand{\f}{\frac}
\newcommand{\df}{\dfrac}

\begin{document}

\title[Heston Model]{Analytic approach to solve a degenerate parabolic PDE for the Heston model}
\author{A. Canale}
\address{Dipartimento di Matematica, Universit\`a degli Studi di Salerno, Via Giovanni Paolo II, 132, I 84084 FISCIANO (Sa), Italy.}
\email{acanale@unisa.it}
\author{R.M. Mininni}
\address{Dipartimento di Matematica Universit\`a degli Studi di Bari A. Moro, Via E. Orabona 4, 70125 Bari, Italy.}
\email{rosamaria.mininni@uniba.it}
\author{A. Rhandi}
\thanks{The third author has been supported by the M.I.U.R. research project
Prin 2010MXMAJR}
%Prin 2008 ``{\it Metodi determini\-sti\-ci e stocastici nello studio di problemi di evoluzione}''.}
\address{Dipartimento di Ingegneria dell'Informazione, Ingegneria Elettrica e Matematica Applicata, Universit\`a di Salerno, Via Giovanni Paolo II, 132, I 84084 FISCIANO (Sa), Italy.}
\email{arhandi@unisa.it}
\keywords{European option, degenerate parabolic PDE, stochastic volatility process, Heston model, mathematical finance,
variational formulation, weighted Sobolev spaces, semigroup of operators.}
\subjclass[2000]{35K65, 47D06, 49J40, 60J60}

%\date{23 maggio 2014}
%\begin{document}

\maketitle

\begin{abstract}
We present an analytic approach to solve a degenerate parabolic problem associated to the Heston model, which is widely used in mathematical finance to derive the price of an European option on an risky asset with stochastic volatility. We give a variational formulation, involving weighted Sobolev spaces, of the second order degenerate elliptic operator of the parabolic PDE. We use this approach to prove, under appropriate assumptions on some involved unknown parameters, the existence and uniqueness of weak solutions to the parabolic problem on unbounded subdomains of the half-plane.
\end{abstract}

\maketitle

\section{Introduction}
Heston in \cite{H} derived the pricing formula of a stock European option when the price process $\{S_t, t\ge 0\}$ of the underlying asset satisfies the following stochastic differential equation (SDE)

\be\label{bseq}
dS_t = \eta\, S_t\, dt + \sqrt{Y_t}\, S_t\, dW_t, \quad t\ge 0,
\ee
where the constant parameter $\eta\in\R$ denotes the instantaneous mean return of the underlying asset, and, contrary to the original Black and Scholes model for European options \cite{BS}, the non-constant volatility $\sqrt{Y_t}$ is supposed to be stochastic. The variance process $Y=\{Y_t, t\ge 0\}$ is assumed to be a diffusion process whose dynamics is described by the following SDE
\be\label{cireq}
dY_t = \kappa(m - Y_t)\, dt + \sigma\, \sqrt{Y_t}\, dZ_t, \quad t\ge 0,
\ee
used in mathematical finance by Cox et al. \cite{CIR} to model ``short-term interest rates" of zero-coupon bonds. The parameters $\kappa, m$ and $\sigma$ are supposed to be positive constants. The process $Y$ is known in literature as \emph{CIR process} or \emph{square-root process}. In particular, $m$ is the long-run mean value of $Y_t$ as $t\to\infty$, $\kappa$ is called the ``rate of mean reversion" that is, $\kappa$ determines the speed of adjustment of the sample paths of $Y$ toward $m$, and $\sigma$ is the constant volatility of variance (often called the  \emph{volatility of volatility}). The state space of the diffusion $Y$ is the interval $[0,\infty)$.
%As showed in the proof of Theorem 3 in \cite{GGMR}, from an analysis of the behavior of $Y$ at the boundary points 0 and $\infty$, it follows that a simple path starting at $Y_0 = x\ge 0$ stays strictly positive when $\kappa\, m \ge \sigma^2/2$ i.e., the boundary 0 is unattainable. If $\kappa\, m < \sigma^2/2$, the origin is an attainable and instantaneously reflecting barrier: each sample path returns immediately to positive values when the origin is hit.

The processes $\{W_t, t\ge 0\}$ and $\{Z_t, t\ge 0\}$ in \eqref{bseq} and \eqref{cireq} are standard one-dimensional Brownian motions. They are supposed to be correlated
$$
dW_t\, dZ_t = \rho\, dt,
$$
where $\rho\in [-1,1]$ denotes the instantaneous correlation coefficient.

Using the two-dimensional Ito's formula (cf., for example, \cite[Chap. IV.32]{RW}), the price $U$ of an European option with a risky underlying asset, fixed maturity date $T>0$ and exercise price $K>0$ satisfies the following degenerate parabolic problem
\smallskip
\be\label{eq1}
\small
\left\{\begin{array}{ll}
\df{\partial U}{\partial t} + \df{1}{2} y S^2\df{\partial^2 U}{\partial S^2} + \df{1}{2}y \sigma^2\df{\partial^2 U}{\partial y^2} + \rho \sigma y S \df{\partial^2 U}{\partial S\partial y} + \kappa(m-y)\df{\partial U}{\partial y}& + r(S\df{\partial U}{\partial S}-U)= 0,\\[2.5\jot]
 & \hbox{in $[0,T)\times [0,\infty)^2$}\\[3.5\jot]
U(T,S,y) = h(S) & \hbox{in $[0,\infty)^2$},
\end{array}\right.
\ee

\medskip
\noindent with the final pay-off of the option as the terminal condition, namely
$$
h(S)=(S - K)_+ \quad\text{or}\quad h(S)=(K - S)_+
$$
corresponding to European call and put options, respectively. The price $U:=U(t,S,y)$ depends on time $t$, on the stock price variable $S$ and on the variance variable $y$.

The degenerate parabolic problem \eqref{eq1} is obtained imposing some assumptions about the financial market, as the no-arbitrage condition i.e., given the evolutions of $S_t$ and of $Y_t$, the European option is priced in such a way that there are no opportunities to make money from nothing.

The PDE in \eqref{eq1} has degenerate coefficients in the $S$ variable and possibly also in the $y$ variable. In order to remove the degeneracy with respect to the variable $S$, we define the stochastic process $\{X_t, t\ge 0\}$ as follows
$$
X_t = \ln{\left(\df{S_t}{S_0}\right)}, \quad t\ge 0.
$$

\noindent
Further, consider the following function
$$
\widetilde{u}(t,S,y):= U(t,S,y)-e^{-r(T-t)}h(Se^{r(T-t)}),
$$
 which indicates the excess to discounted pay-off. The parameter $r\ge 0$ denotes the constant risk-neutral interest rate, As observed by Hilber et al. in \cite{HMS}, according to the boundary conditions on the PDE in \eqref{eq1} suggested in \cite{H}, $\widetilde{u}$ decays to zero as $S\to 0$ and $S\to\infty$. Then, by changing the time $t\to T-t$, setting $x=\ln{S}$ (assume $S_0 = 1$), and using the following transformation
\begin{equation}\label{transf}
u(t,x,y):=e^{-\f{\omega}{2}y^2}\, \widetilde{u}(T-t,S,y),\quad \omega>0,
\end{equation}
we deduce from (\ref{eq1}) that the function $u$ satisfies the following initial value forward parabolic problem
\begin{equation}\label{eq2}
\left\{\begin{array}{ll}
\df{\partial u}{\partial t}(t,x,y) = - (\mathcal{L}^Hu)(t,x,y) + F(t,y), & \hbox{$t\in (0,T],\, (x,y)\in\Omega$} \\[3.5\jot]
u(0,x,y)=0, & \hbox{$(x,y)\in\Omega$},
\end{array}
\right.
\end{equation}
where $\Omega =\mathbb{R}\times [0,\infty)$. The operator $\mathcal{L}^H$ is given by
\begin{align}\label{lhop}
(\mathcal{L}^H\varphi)(x,y) &= -\df{1}{2}y\df{\partial^2 \varphi}{\partial x^2}-\df{1}{2}\sigma^2 y\df{\partial^2 \varphi}{\partial y^2}-\rho \sigma y\df{\partial^2 \varphi}{\partial x\partial y}\nnm\\[1.5\jot]
&- (\omega \rho \sigma y^2-\df{1}{2}y+r)\df{\partial \varphi}{\partial x}
- [\omega \sigma^2 y^2+\kappa(m-y)]\frac{\partial \varphi}{\partial y}\nnm\\[1.5\jot]
&-\left[\frac{1}{2}\omega \sigma^2 y(\omega y^2+1)+\omega y\kappa(m-y)-r\right]\varphi
\end{align}
and
$$
F(t,y)=\df{K}{2}ye^{-rt}e^{-\f{\omega}{2} y^2}\delta_{\ln{K} - rt}.
$$

The motivation to consider the transformation \eqref{transf} is explained in \cite{HMS}, taking into account that the price $U$ remains bounded for all $y$ (cf. \cite{H}).
%This and the fact that $\widetilde{u}$ decays to $0$ as $S\to 0$ and $S\to \infty$ motivate the choice of the weights $\phi(x)=e^{\nu |x|}$ and $\psi(y)=e^{(\mu/2)y^2}$.

To our knowledge, the use of a variational approach to prove existence and uniqueness of solutions to these pricing problems is very recent. Achdou et al. \cite{AT}-\cite{AFT} used variational analysis using appropriate weighted Sobolev spaces to solve parabolic problems connected to option pricing when the variance process $Y$ is a function of a mean reverting Ornstein-Uhlenbech (OU) process. Successively, proceedings as in the previous works, Hilber et al. \cite{HMS} used variational formulation to present numerical solutions by a sparse wavelet finite element method to pricing problems in terms of parabolic PDEs when the volatility is modeled by a OU process or a CIR process. Daskalopoulos and Feehan \cite{DF} used variational analysis with the aid of weighted Sobolev spaces to prove the existence, uniqueness and global regularity of solutions to obstacle problems for the Heston model, which in mathematical finance correspond to solve pricing problems for perpetual American options on underlying risky assets.

Observe that by applying a space-time transformation, the diffusion $Y$ follows the dynamics of a squared Bessel process with dimension
$$
\alpha=\df{4\kappa m}{\sigma^2} > 0
$$
(cf. \cite[Section 6.3]{JYC}). It is known (cf. \cite[Chap. V.48]{RW}) that for $\alpha>2$ a general $\alpha$-dimensional squared Bessel process starting from a positive initial point stays strictly positive and tends to infinity almost surely as time approaches infinity while, for $\alpha=2$ the process is strictly positive but gets arbitrarily close to zero and $\infty$, and for $\alpha > 2$ the process hits zero 0 recurrently but will not stay at zero, i.e. the 0-boundary is strongly reflecting.

Ideally, one would like a variance process which is strictly positive, because otherwise it degenerates to a deterministic function for the time it stays at zero. Then to translate this property to the CIR process $Y$, we assume the condition
\be\label{feller-condition}
\kappa m>\f{\sigma^2}{2}.
\ee
Simulation studies to investigate numerically how the effect of varying the correlation $\rho$ (cf. \cite{G}) and the volatility parameter $\sigma$ (cf. \cite{M}) impacts on the shape of the implied volatility curve in the Heston model, clearly show that under the condition \eqref{feller-condition} the volatility $\sqrt{Y_t}$ always remains strictly positive.

%If the condition is not satisfied, i.e. for $0 < \alpha < 2$, the CIR process will visit 0 recurrently but will not stay at zero, i.e. the 0-boundary is strongly reflecting. Unfortunately, when calibrating the Heston model to market option prices it is not uncommon for the parameters to violate the Feller condition (1.32). This is not a complete disaster, as the variance process can only hit zero for an infinitesimally small amount of time, but it is still worrying as very low levels of volatility (e.g. say below 1%) are repeatedly reached for short amounts of time and that is not something observed in the market.

Thus, the above arguments let us to assume $y\in [a,\infty)$ with a sufficiently small $a>0$, in order to remove the degeneracy at zero with respect to the variable $y$ and take $\Omega=\mathbb{R}\times [a,\infty)$ in \eqref{eq2}.

By using the variational formulation of the parabolic PDE in \eqref{eq2} performed in \cite{HMS}, the aim of the present paper is to use form methods to prove the existence and uniqueness of a weak solution to the problem \eqref{eq2} and to study the existence of a positive and analytic semigroup generated by $-\mathcal{L}^H$, with an appropriate domain, in a weighted $L^2$-space with suitable weights $\phi$ and $\psi$.
 %is to show, under appropriate conditions on some involved parameters, the existence of the unique (weak) solution to the initial value problem \eqref{eq2}.

The article is organized as follows. In Section 2 we define the Hilbert and weighted Sobolev spaces
we shall need throughout this article, describe our assumptions on the Heston operator coefficients and prove
the continuity estimate for the sesquilinear form defined by the operator $\mathcal{L}^H$ given in \eqref{lhop}, with Dirichlet boundary conditions. In Section 3 we derive Garding's inequality for the sesquilinear form, and deduce the existence of a unique weak solution to the problem \eqref{eq2}. We obtain also that the realization of $-\mathcal{L}^H$ in $L^2$ with Dirichlet boundary conditions generates an analytic semigroup $(e^{-t\mathcal{L}^H})$. The positivity of $(e^{-t\mathcal{L}^H})$ can be proved applying the first Beurling-Deny criteria.

\section{Heston model: the variational formulation}
Throughout this article, the coefficients of the operator $\mathcal{L}^H$ are required to obey the Feller condition \eqref{feller-condition} and $\Omega=\mathbb{R}\times [a,\infty)$ with some positive constant $a$.

%\begin{assumption} The coefficients defining $\mathcal{L}^H$ in \eqref{lhop} satisfy
%$$
%\sigma > 0,\quad \kappa\, m > \sigma^2/2,\quad \rho\in (-1,1).
%$$
%\end{assumption}
%As observed in Section 1, the condition $\kappa\, m > \sigma^2/2$ ensures that if the initial volatility value is $Y_0>0$, then the value of $Y_t$ remains strictly positive for any $t>0$.

We propose to use form methods to solve the parabolic PDE in (\ref{eq2}). To this purpose we consider the weight functions
\[
\phi(x)=e^{\nu|x|}, \qquad \psi(y)=e^{\frac{\mu}{2}y^2},\quad (x,y)\in\Omega,
\qquad \nu,\,\mu>0,
\]
and define the Hilbert space
$$L^2_{\phi, \psi}(\Omega)=
\{v\,|\,(x,y)\mapsto v(x,y)\phi(x)\psi(y)\in L^2(\Omega)\}$$
equipped with the weighted $L^2$-norm
$$\|v\|_{\phi ,\psi}=\left(\int_\Omega |v(x,y)|^2\phi^2(x)\psi^2(y)\,dx\,dy\right)^{\frac{1}{2}}.$$
Furthermore we define the weighted Sobolev space
\[
V_{\phi,\psi}=
\Big\{v\,\Big|\quad \Big(v,\sqrt y{\frac{\partial v}{\partial x}},
\sqrt {y}{\frac{\partial v}{\partial y}}\Big)
\in (L^2_{\phi, \psi}(\Omega))^3\Big\}.
\]
The space $V_{\phi,\psi}$ is equipped with the norm
\[
\|u\|_{V_{\phi,\psi}}=
\Big(\|u\|^2_{\phi, \psi}+
\Big\|\sqrt y{\f{\partial u}{\partial x}}\Big\|^2_{\phi, \psi}+
\Big\|\sqrt {y}\, {\f{\partial u}{\partial y}}\Big\|^2_{\phi, \psi}\Big)^
{\frac{1}{2}}.
\]
The sesquilinear form associated to $\mathcal {L}^H$ in $L^2_{\phi ,\psi}(\Omega)$ is given by
\begin{equation}\label{eq3}
a^{\phi, \psi}_H(u,v)=
\int_{\Omega}(\mathcal {L}^H u)(x,y) \overline{v}(x,y)\phi^2(x)\psi^2(y) dx\,dy,
\qquad u,v\in C^{\infty}_c(\Omega).
\end{equation}

We note first the following standard result.

\begin{lemma}\label{l0}
The following assertions hold:
\begin{enumerate}
\item[(a)] The space of test functions $C_c^\infty(\Omega)$ is dense in $L^2_{\phi ,\psi}(\Omega)$,
\item[(b)] the space $V_{\phi,\psi}$ equipped with the norm
$\|\cdot \|_{V_{\phi,\psi}}$ is a Hilbert space.
\end{enumerate}
\end{lemma}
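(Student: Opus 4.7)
For part (a), the plan is a standard truncation-plus-mollification argument. The obvious shortcut --- multiplying test functions by $(\phi\psi)^{-1}$ --- does not quite work here because $\phi(x)=e^{\nu|x|}$ fails to be smooth at $x=0$, so we cannot simply pull back $C_c^\infty(\Omega)$-density from $L^2(\Omega)$ via the isometry $v\mapsto v\phi\psi$. Instead, given $u\in L^2_{\phi,\psi}(\Omega)$, I would pick an exhausting family of smooth cut-offs $\chi_n\in C_c^\infty(\Omega)$ with $0\le\chi_n\le 1$ and $\chi_n\to 1$ pointwise, and set $u_n=\chi_n u$. Dominated convergence in the weighted norm (the dominant being $|u|^2\phi^2\psi^2\in L^1(\Omega)$) yields $u_n\to u$ in $L^2_{\phi,\psi}(\Omega)$. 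On the compact support $K_n=\supp\chi_n\Subset\Omega$ the weights $\phi,\psi$ are continuous and strictly positive, hence bounded above and below, so the weighted and unweighted $L^2$-norms are equivalent on functions supported in $K_n$. Mollifying $u_n$ with a standard Friedrichs mollifier at scale $\varepsilon_n$ small enough that the mollified function is still supported in $\Omega$ produces $\varphi_n\in C_c^\infty(\Omega)$ converging to $u_n$ in $L^2(\Omega)$, and hence in $L^2_{\phi,\psi}(\Omega)$. A diagonal choice finishes (a).

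For part (b), the plan is the standard completeness argument for weighted Sobolev spaces. Let $(u_n)$ be Cauchy in $V_{\phi,\psi}$. Then each of the three sequences
\[
u_n,\qquad \sqrt{y}\,\frac{\partial u_n}{\partial x},\qquad \sqrt{y}\,\frac{\partial u_n}{\partial y}
\]
is Cauchy in the Hilbert space $L^2_{\phi,\psi}(\Omega)$, so they converge to limits $u,v,w\in L^2_{\phi,\psi}(\Omega)$, respectively. The task is to identify $v=\sqrt{y}\,\partial_x u$ and $w=\sqrt{y}\,\partial_y u$ in $\mathcal{D}'(\Omega)$. Here the key point is that on $\Omega=\mathbb{R}\times[a,\infty)$ we have $\sqrt{y}\ge\sqrt{a}>0$, and on any compact set $K\subset\Omega$ the weights $\phi,\psi$ are bounded above and below by positive constants. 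Consequently convergence in $L^2_{\phi,\psi}(\Omega)$ implies convergence in $L^2_{\mathrm{loc}}(\Omega)$, so $u_n\to u$, $\partial_x u_n\to v/\sqrt{y}$ and $\partial_y u_n\to w/\sqrt{y}$ in $L^2_{\mathrm{loc}}(\Omega)$, and a fortiori in $\mathcal{D}'(\Omega)$. Since distributional differentiation is continuous on $\mathcal{D}'(\Omega)$, passing to the limit in $\int_\Omega u_n\,\partial_x\varphi = -\int_\Omega (\partial_x u_n)\,\varphi$ for $\varphi\in C_c^\infty(\Omega)$ gives $\partial_x u=v/\sqrt{y}$ (and similarly for $\partial_y u$). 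Hence $u\in V_{\phi,\psi}$ and $u_n\to u$ in $V_{\phi,\psi}$.

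Neither part is deep. The only real friction is the non-smoothness of $\phi$ at $x=0$, which rules out the one-line isometry argument in (a) and forces the truncation-then-mollify route outlined above; everything else reduces to local $L^2$ arguments exploiting the strict positivity of $y$ on $\Omega$ and of the weights on compact subsets.
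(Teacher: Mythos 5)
Your proof is correct, and part (b) is essentially the paper's argument: the paper notes that $y\ge a$ embeds $V_{\phi,\psi}$ continuously into the classical weighted space $H^1_{\phi,\psi}(\Omega)$ and invokes its completeness, whereas you spell out the identification of the weak derivatives directly via $L^2_{\mathrm{loc}}$ and distributional convergence --- same content, just self-contained. In part (a) your route differs slightly, and your stated reason for rejecting the ``obvious shortcut'' slightly mischaracterizes it: the paper does use the isometry $v\mapsto v\phi\psi$, but only to conclude that $\phi^{-1}\psi^{-1}\varphi\in C_c(\Omega)$ (continuous, not smooth, precisely because of the kink of $\phi$ at $x=0$), hence that $C_c(\Omega)$ is dense in $L^2_{\phi,\psi}(\Omega)$; it then finishes with a standard mollification, which converges in the weighted norm because the weights are bounded on the (compact) supports involved. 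So both arguments are two-step ``reduce to compact support, then mollify'' proofs; yours truncates the original $u$ by cut-offs, the paper's pulls back an $L^2$-approximant through the isometry. Neither buys anything substantial over the other, and both correctly exploit the same two facts: local equivalence of the weighted and unweighted norms, and $\sqrt{y}\ge\sqrt{a}>0$ on $\Omega$.
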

\begin{proof}
Let $u\in L^2_{\phi ,\psi}(\Omega)$. Then $u\phi \psi\in L^2(\Omega)$ and so, for any $\varepsilon >0$ there is $\varphi\in C_c^\infty(\Omega)$ such that $\|\varphi -u\phi \psi\|_{L^2}=\|\phi^{-1}\psi^{-1}\varphi -u\|_{\phi ,\psi}<\varepsilon$. Since $\phi^{-1}\psi^{-1}\varphi\in C_c(\Omega)$, we deduce that $C_c(\Omega)$ is dense in $L^2_{\phi ,\psi}(\Omega)$. Thus the assertion (a) follows by standard mollifier argument.

To prove (b) we have only to show that $V_{\phi,\psi}$ equipped with the norm
$\|\cdot \|_{V_{\phi,\psi}}$ is complete. Consider a Cauchy sequence $(u_n)$ in $(V_{\phi,\psi}, \|\cdot \|_{V_{\phi,\psi}})$. Since $y\ge a$, it follows that $V_{\phi,\psi}$ is continuously embedded in the classical weighted Sobolev space
$$H^1_{\phi ,\psi}(\Omega):=
\Big\{v\,\Big|\quad \Big(v,{\frac{\partial v}{\partial x}},
{\frac{\partial v}{\partial y}}\Big)
\in (L^2_{\phi, \psi}(\Omega))^3\Big\}.$$ Hence, $u_n$ converges to some $u\in H^1_{\phi ,\psi}(\Omega)$. On the other hand, by the convergence of $\sqrt{y}{\frac{\partial u_n}{\partial x}}$ and $\sqrt{y}{\frac{\partial u_n}{\partial y}}$ in $L^2_{\phi, \psi}(\Omega)$ (and hence a.e. by taking a subsequence), it follows that $u\in V_{\phi,\psi}$ and $u_n$ converges to $u$ with respect to the norm $\|\cdot \|_{V_{\phi,\psi}}$.
\end{proof}

The following lemma shows that $a_H^{\phi ,\psi}$ can be extended continuously to a sesquilinear form on
$V_{\phi,\psi}^0\times V_{\phi,\psi}^0$, where $V_{\phi,\psi}^0$ denotes the closure of $C^{\infty}_c(\Omega)$ in $V_{\phi,\psi}$
\begin{lemma}\label{l1}
There is a positive constant $M$ such that
$$|a_H^{\phi ,\psi}(u,v)|\le M \|u\|_{V_{\phi ,\psi}}\|v\|_{V_{\phi ,\psi}},\quad \forall u,\,v\in V_{\phi,\psi}^0.$$
\end{lemma}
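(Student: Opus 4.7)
By density of $C^\infty_c(\Omega)$ in $V^0_{\phi,\psi}$ (Lemma~\ref{l0}(a)), it suffices to prove the bound for $u,v\in C^\infty_c(\Omega)$. My plan is to rewrite $a_H^{\phi,\psi}(u,v)$ in symmetric first-order shape by integrating by parts once in each of the three second-order terms of $\mathcal{L}^H$ given in \eqref{lhop}. Since $u,v$ are compactly supported in the interior of $\Omega$, all boundary contributions vanish. Each derivative that falls on the weight $\phi^2(x)\psi^2(y)$ multiplies the integrand by either $\partial_x(\phi^2)/\phi^2 = 2\nu\,\mathrm{sgn}(x)$, which is bounded, or by $\partial_y(\psi^2)/\psi^2 = 2\mu y$, which is linear in $y$. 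Combined with the polynomial $y$-coefficients already present in $\mathcal{L}^H$, this produces a representation
\[
a_H^{\phi,\psi}(u,v) \;=\; \sum_{\alpha,\beta}\int_\Omega p_{\alpha\beta}(y)\,(D^\alpha u)\,\overline{(D^\beta v)}\,\phi^2(x)\,\psi^2(y)\,dx\,dy,
\]
where $D^\alpha,D^\beta\in\{\mathrm{id},\partial_x,\partial_y\}$ and each $p_{\alpha\beta}$ is a polynomial in $y$ of degree at most three.

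The principal contributions, $\int y\,(\partial_i u)(\partial_j\bar v)\,\phi^2\psi^2$, are bounded directly by Cauchy-Schwarz as $\|\sqrt{y}\,\partial_i u\|_{\phi,\psi}\|\sqrt{y}\,\partial_j v\|_{\phi,\psi}\le \|u\|_{V_{\phi,\psi}}\|v\|_{V_{\phi,\psi}}$. The delicate pieces are the subprincipal and zeroth-order terms, of the shape $\int y^s\,(D^\alpha u)\overline{(D^\beta v)}\,\phi^2\psi^2$ with $s$ up to $3$ and at most one first-order derivative on each side, since these cannot be controlled directly by the $V$-norm. The key tool I would establish is a family of weighted Poincaré-type inequalities
\[
\|y^k\,v\|_{\phi,\psi}\;\le\; C_k(\mu,a)\,\|v\|_{V_{\phi,\psi}}\qquad \text{for } k\in\{\tfrac12,1,\tfrac32\},
\]
obtained by writing $y^{2k}\psi^2 = (2\mu)^{-1}y^{2k-1}\partial_y(\psi^2)$, integrating by parts in $y$, using that the Dirichlet trace $v(\cdot,a)=0$ kills the boundary, and absorbing the resulting $\|y^k v\|_{\phi,\psi}$ on the left via Young's inequality together with the elementary comparison $y\le 1+y^2$ when needed.

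Combining these auxiliary estimates with Cauchy-Schwarz on each summand of the representation of $a_H^{\phi,\psi}(u,v)$ decomposes every term into two factors, each controlled by a constant multiple of $\|u\|_{V_{\phi,\psi}}$ or $\|v\|_{V_{\phi,\psi}}$; summing the finitely many contributions yields the required bound with $M$ depending only on $\sigma,\rho,\kappa,m,r,\omega,\nu,\mu,a$, and the extension from $C^\infty_c(\Omega)$ to $V^0_{\phi,\psi}$ is immediate by density. The step I expect to be hardest is the control of the cubic term $\tfrac12\omega\sigma^2 y^3$ in the zeroth-order coefficient of $\mathcal{L}^H$: after the integration by parts it produces integrals of the form $\int y^3|u||v|\,\phi^2\psi^2$, which cannot be tamed by the $V$-norm alone. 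It is precisely the Gaussian weight $\psi(y)=e^{\mu y^2/2}$, whose logarithmic derivative grows linearly in $y$, that makes the absorption scheme above succeed and forces a Gaussian weight in $y$ rather than a purely exponential one.
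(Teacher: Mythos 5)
Your proposal is correct and follows essentially the same route as the paper: integrate by parts to put the form in first-order shape, bound the principal terms by Cauchy--Schwarz, and control the higher $y$-moments through the identity $\partial_y(\psi^2)=2\mu y\,\psi^2$ (the paper's \eqref{eq5}--\eqref{eq7}) together with a Young-inequality absorption, culminating in the same key estimate $\|y^{3/2}v\|_{\phi,\psi}\le \mu^{-1}\|\sqrt{y}\,\partial_y v\|_{\phi,\psi}$ that tames the cubic zeroth-order coefficient. Your closing observation that the Gaussian weight in $y$ is exactly what makes this absorption work matches the role it plays in the paper's argument.
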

\begin{proof}
Integrating by parts, it follows from (\ref{eq3}) that
\begin{eqnarray*}
a_H^{\phi ,\psi}(u,v) &=& \frac{1}{2}\int_\Omega y\frac{\partial u}{\partial x}\frac{\partial \overline{v}}{\partial x}\phi^2\psi^2 +\int_\Omega y\frac{\partial u}{\partial x}\overline{v}\left(\frac{\phi '}{\phi}\right)\phi^2\psi^2+\frac{\sigma^2}{2}\int_\Omega y\frac{\partial u}{\partial y}\frac{\partial \overline{v}}{\partial y}\phi^2\psi^2\\
& & +\frac{\sigma^2}{2}\int_\Omega \frac{\partial u}{\partial y}\overline{v}\phi^2\psi^2+\mu \sigma^2\int_\Omega y^2\frac{\partial u}{\partial y}\overline{v}\phi^2 \psi^2+2\rho \sigma \int_\Omega y\frac{\partial u}{\partial y}\overline{v}\left(\frac{\phi '}{\phi}\right)\phi^2\psi^2\\
& & +\rho \sigma \int_\Omega y\frac{\partial u}{\partial y}\frac{\partial \overline{v}}{\partial x}\phi^2\psi^2-\int_\Omega(\omega \rho \sigma y^2-\frac{1}{2}y+r)\frac{\partial u}{\partial x}\overline{v}\phi^2\psi^2\\
& & -\int_\Omega[\omega \sigma^2 y^2+\kappa(m-y)]\frac{\partial u}{\partial y}\overline{v}\phi^2\psi^2\\
& & -\int_\Omega \left[\frac{1}{2}\omega \sigma^2 y(\omega y^2+1)+\omega y\kappa(m-y)-r\right]u\overline{v}\phi^2\psi^2
\end{eqnarray*}
holds for $u,\,v\in C_c^\infty(\Omega)$.
By H\"older's inequality, and since $\frac{y}{a}\ge 1$ for $y\in [a,\infty),\,a>0$, we have
$$\left|\int_\Omega y\frac{\partial u}{\partial x}\frac{\partial \overline{v}}{\partial x}\phi^2\psi^2 \right|\le \|u\|_{V_{\phi ,\psi}}\|v\|_{V_{\phi ,\psi}},\,\left|\int_\Omega y\frac{\partial u}{\partial y}\frac{\partial \overline{v}}{\partial y}\phi^2\psi^2 \right|\le \|u\|_{V_{\phi ,\psi}}\|v\|_{V_{\phi ,\psi}},$$
$$\left|\int_\Omega y\frac{\partial u}{\partial y}\frac{\partial \overline{v}}{\partial x}\phi^2\psi^2 \right|\le \|u\|_{V_{\phi ,\psi}}\|v\|_{V_{\phi ,\psi}},\,\,\left|\int_\Omega \frac{\partial u}{\partial x}\overline{v}\phi^2\psi^2\right|\le \frac{1}{\sqrt{a}}\|u\|_{V_{\phi ,\psi}}\|v\|_{V_{\phi ,\psi}},\,\hbox{\ and }$$
$$\left|\int_\Omega \frac{\partial u}{\partial y}\overline{v}\phi^2\psi^2\right|\le \frac{1}{\sqrt{a}}\|u\|_{V_{\phi ,\psi}}\|v\|_{V_{\phi ,\psi}}.$$
%Integrating by parts, we obtain
%\begin{eqnarray}\label{eq4}
%\int_\Omega \frac{\partial u}{\partial x}\overline{v}\phi^2\psi^2 &=& \int_\Omega \frac{\partial y}{\partial y}\frac{\partial u}{\partial x}\overline{v}\phi^2\psi^2 \nonumber \\
%&=& -\int_\Omega y\frac{\partial^2 u}{\partial x\partial y}\overline{v}\phi^2\psi^2-\int_\Omega y\frac{\partial u}{\partial x}\frac{\partial \overline{v}}{\partial y}\phi^2\psi^2-2\mu \int_\Omega y^2\frac{\partial u}{\partial x}\overline{v}\phi^2\psi^2 \nonumber \\
%&=& \int_\Omega y\frac{\partial u}{\partial y}\frac{\partial \overline{v}}{\partial x}\phi^2\psi^2 -\int_\Omega y\frac{\partial u}{\partial x}\frac{\partial \overline{v}}{\partial y}\phi^2\psi^2 +\nonumber \\
%& & 2\int_\Omega y\frac{\partial u}{\partial y}\overline{v}\frac{\phi '}{\phi}\phi^2\psi^2-2\mu \int_\Omega y^2\frac{\partial u}{\partial x}\overline{v}\phi^2\psi^2.
%\end{eqnarray}
Since $\psi '(y)=\mu y\psi(y)$, it follows that
\begin{eqnarray}
\int_\Omega yu\overline{v}\phi^2\psi^2 &=& -\frac{1}{2\mu}\left(\int_\Omega \frac{\partial u}{\partial y}\overline{v}\phi^2\psi^2+ \int_\Omega u\frac{\partial \overline{v}}{\partial y}\phi^2\psi^2\right),\label{eq5} \\
\int_\Omega y^2u\overline{v}\phi^2\psi^2 &=& -\frac{1}{2\mu}\left(\int_\Omega y\frac{\partial u}{\partial y}\overline{v}\phi^2\psi^2+ \int_\Omega y\frac{\partial \overline{v}}{\partial y}u\phi^2\psi^2+\int_\Omega u\overline{v}\phi^2\psi^2\right),\label{eq6} \\
\int_\Omega y^3u\overline{v}\phi^2\psi^2 &=& -\frac{1}{2\mu}\left(2\int_\Omega yu\overline{v}\phi^2\psi^2+\int_\Omega y^2\frac{\partial u}{\partial y}\overline{v}\phi^2\psi^2+ \int_\Omega y^2\frac{\partial \overline{v}}{\partial y}u\phi^2\psi^2\right)\label{eq7}.
\end{eqnarray}
%It follows from (\ref{eq5}), (\ref{eq6}) and (\ref{eq7}) that 
Thus it suffices to estimate the integrals
$$\int_\Omega y\frac{\partial u}{\partial y}\overline{v}\phi^2\psi^2,\, \int_\Omega y^2\frac{\partial u}{\partial y}\overline{v}\phi^2\psi^2,\,
\int_\Omega y\frac{\partial u}{\partial x}\overline{v}\phi^2\psi^2,\,\hbox{\ and }\int_\Omega y^2\frac{\partial u}{\partial x}\overline{v}\phi^2\psi^2.$$
Applying (\ref{eq5}) and H\"older's inequality we have
\begin{eqnarray*}
\left|\int_\Omega y\frac{\partial u}{\partial y}\overline{v}\phi^2\psi^2\right| &\le & \|u\|_{V_{\phi ,\psi}}\|\sqrt{y}v\|_{\phi ,\psi}\le \frac{1}{a\mu}\|u\|_{V_{\phi ,\psi}}\|v\|_{V_{\phi ,\psi}},\\
\left|\int_\Omega y\frac{\partial u}{\partial x}\overline{v}\phi^2\psi^2\right| &\le & \|u\|_{V_{\phi ,\psi}}\|\sqrt{y}v\|_{\phi ,\psi}\le \frac{1}{a\mu}\|u\|_{V_{\phi ,\psi}}\|v\|_{V_{\phi ,\psi}}.
\end{eqnarray*}
On the other hand, applying again H\"older's inequality we get
\begin{eqnarray*}
\left|\int_\Omega y^2\frac{\partial u}{\partial y}\overline{v}\phi^2\psi^2\right| &\le & \|u\|_{V_{\phi ,\psi}}\|y^{\frac{3}{2}}v\|_{\phi ,\psi} \,\hbox{\ and }\\
\left|\int_\Omega y^2\frac{\partial u}{\partial x}\overline{v}\phi^2\psi^2\right| &\le & \|u\|_{V_{\phi ,\psi}}\|y^{\frac{3}{2}}v\|_{\phi ,\psi}.
\end{eqnarray*}
It remains to estimate $\|y^{\frac{3}{2}}v\|_{\phi ,\psi}$. It follows from (\ref{eq7}) that
\begin{eqnarray*}
\|y^{\frac{3}{2}}v\|_{\phi ,\psi}^2 &\le & \frac{1}{\mu}\left|\int_\Omega y^2\frac{\partial v}{\partial y}\overline{v}\phi^2\psi^2\right|\\
&\le & \frac{1}{2}\|y^{\frac{3}{2}}v\|_{\phi ,\psi}^2 +\frac{1}{2\mu^2}\|\sqrt{y}\frac{\partial v}{\partial y}\|_{\phi ,\psi}^2.
\end{eqnarray*}
Hence,
$$\|y^{\frac{3}{2}}v\|_{\phi ,\psi}\le \frac{1}{\mu}\|\sqrt{y}\frac{\partial v}{\partial y}\|_{\phi ,\psi}.$$
This ends the proof of the lemma.
\end{proof}

\section{Existence and uniqueness of solutions to the variational equation}
The following proposition deals with the quasi-accretivity of the sesquilinear form $a_H^{\phi ,\psi}$.
\begin{proposition}\label{p1}
Assume that \eqref{feller-condition} is satisfied. Then, under appropriate conditions on $\rho ,\,\nu ,\,\mu$ and $\omega$, there are constants $c_1>0$ and $c_2\in \R$ such that
\begin{equation}\label{Garding}
\Re a_H^{\phi ,\psi}(v,v)\ge c_1\|v\|_{V_{\phi ,\psi}}+c_2\|v\|_{\phi ,\psi}^2,\quad \forall v\in V^0_{\phi ,\psi}.
\end{equation}
%provided that
%\begin{equation}\label{eq8'}
%|\rho|<\sqrt{\frac{1}{2}-\frac{1}{2\sqrt{1+\delta}}}.
%\end{equation}
\end{proposition}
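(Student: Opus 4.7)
The plan is to expand $\Re a_H^{\phi,\psi}(u,u)$ starting from the integration-by-parts identity already derived in the proof of Lemma~\ref{l1}, and then split the resulting expression into a coercive principal part plus remainders that can be absorbed into it at the price of a (possibly negative) multiple of $\|u\|_{\phi,\psi}^2$.

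The principal part consists of the three terms
$$ \tfrac12\int_\Omega y|\partial_x u|^2\phi^2\psi^2\,dx\,dy + \tfrac{\sigma^2}{2}\int_\Omega y|\partial_y u|^2\phi^2\psi^2\,dx\,dy + \rho\sigma\,\Re\int_\Omega y\,\partial_y u\,\partial_x\bar u\,\phi^2\psi^2\,dx\,dy. $$
This is the sesquilinear form of the symmetric matrix $\left(\begin{smallmatrix}1 & \rho\sigma\\ \rho\sigma & \sigma^2\end{smallmatrix}\right)$ evaluated at $(\sqrt{y}\,\partial_x u,\sqrt{y}\,\partial_y u)$, and is positive definite as soon as $|\rho|<1$. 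Young's inequality $|\rho\sigma ab|\le\tfrac{|\rho|}{2}(a^2+\sigma^2 b^2)$ then yields the lower bound
$$ \ge \tfrac{1-|\rho|}{2}\int_\Omega y|\partial_x u|^2\phi^2\psi^2 + \tfrac{\sigma^2(1-|\rho|)}{2}\int_\Omega y|\partial_y u|^2\phi^2\psi^2, $$
which will be the sole source of gradient coercivity; every other contribution must either be non-negative or absorbable into these two integrals.

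For each remaining first-order-in-$u$ term I would use $\Re(\partial_\alpha u\,\bar u)=\tfrac12\partial_\alpha|u|^2$ and integrate by parts against $(\phi^2)'=2\nu\,\mathrm{sgn}(x)\phi^2$ or $(\psi^2)'=2\mu y\psi^2$. Each such term then becomes a zeroth-order integral of the schematic shape $\int_\Omega p_k(y)\,\mathrm{sgn}(x)^{j}|u|^2\phi^2\psi^2$ with $p_k$ a polynomial in $y$ of degree at most three; boundary contributions at $y=a$ vanish because $u\in V^0_{\phi,\psi}$ and at $y=\infty$ by the weight decay. The only non-trivial boundary piece is the distributional one coming from $(\mathrm{sgn}\,x)'=2\delta_0$, which produces $-\nu\int y|u(0,y)|^2\psi^2(y)\,dy$; I would estimate it by the one-dimensional trace inequality
$$ |u(0,y)|^2 \le \|u(\cdot,y)\phi\|_{L^2(\R)}^2 + \|\partial_x(u(\cdot,y)\phi)\|_{L^2(\R)}^2, $$
yielding a small multiple of $\int y|\partial_x u|^2\phi^2\psi^2$ (absorbable provided $\nu$ is small enough) plus a constant times $\|u\|_{\phi,\psi}^2$.

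What remains are the collected zeroth-order integrals $\int_\Omega y^k|u|^2\phi^2\psi^2$ for $k=0,1,2,3$. The $k=0$ piece enters directly into the $c_2\|u\|_{\phi,\psi}^2$ side. For $k=1,2$, any net negative coefficient is handled by the identities \eqref{eq5}--\eqref{eq6}, which rewrite $\int y|u|^2$ and $\int y^2|u|^2$ in terms of $\int y^{k-1}\partial_y u\,\bar u$; combined with Young's inequality this gives bounds of the form $\int y^k|u|^2\phi^2\psi^2 \le \varepsilon\int y|\partial_y u|^2\phi^2\psi^2 + C_\varepsilon\|u\|_{\phi,\psi}^2$. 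The delicate term is $k=3$, whose net coefficient after all the bookkeeping equals $-\tfrac{\sigma^2}{2}(2\mu^2-2\mu\omega+\omega^2)$; for it I would invoke the sharp inequality $\|y^{3/2}u\|_{\phi,\psi}\le\mu^{-1}\|\sqrt{y}\,\partial_y u\|_{\phi,\psi}$ established at the end of the proof of Lemma~\ref{l1} to get
$$ \int_\Omega y^3|u|^2\phi^2\psi^2 \le \tfrac{1}{\mu^2}\int_\Omega y|\partial_y u|^2\phi^2\psi^2, $$
and absorb the resulting quantity into the coercive $y$-derivative term. The ``appropriate conditions on $\rho,\nu,\mu,\omega$'' of the statement are exactly the ones that make every absorption quantitatively feasible: $|\rho|<1$ for the cross term, $\nu$ small relative to $\tfrac{1-|\rho|}{2}$ for the trace at $x=0$, and $(\mu,\omega)$ tuned so that the combined negative coefficient produced by the $k=3$ bound together with the $k=1,2$ losses is still strictly smaller than $\tfrac{\sigma^2(1-|\rho|)}{2}$, using crucially the positive zeroth-order contribution $\kappa m\mu\int y|u|^2\phi^2\psi^2$ produced by the Feller drift (this is where \eqref{feller-condition} enters). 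The main obstacle will be precisely this last step: tracking the sharp constants produced by each integration by parts and showing that the admissible window for $(\mu,\omega)$ is non-empty, since the $k=3$ estimate is very tight and its counter-weight comes almost entirely from the single bound $\|y^{3/2}u\|_{\phi,\psi}\le\mu^{-1}\|\sqrt y\,\partial_y u\|_{\phi,\psi}$.
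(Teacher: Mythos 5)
Your decomposition and most of the bookkeeping track the paper's proof closely (the paper likewise starts from the integrated-by-parts form, isolates the coercive pair $\tfrac12\|\sqrt y\,\partial_x v\|^2_{\phi,\psi}+\tfrac{\sigma^2}{2}\|\sqrt y\,\partial_y v\|^2_{\phi,\psi}$, and uses the identities \eqref{eq5}--\eqref{eq7} to move powers of $y$ around; your net $y^3$-coefficient $-\tfrac{\sigma^2}{2}\bigl(2\mu^2-2\mu\omega+\omega^2\bigr)$ agrees with the paper's $\omega\mu\sigma^2-\sigma^2\mu^2-\tfrac{\omega^2\sigma^2}{2}$). But the step you yourself flag as the main obstacle is not merely delicate --- as you have set it up, it provably cannot close. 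Absorbing the $k=3$ term through $\|y^{3/2}u\|_{\phi,\psi}\le\mu^{-1}\|\sqrt y\,\partial_y u\|_{\phi,\psi}$ costs at least
\[
\frac{\sigma^2}{2}\cdot\frac{(\omega-\mu)^2+\mu^2}{\mu^2}\;\ge\;\frac{\sigma^2}{2}
\]
of the coefficient of $\|\sqrt y\,\partial_y u\|^2_{\phi,\psi}$, whereas the total available coercivity there is at most $\tfrac{\sigma^2}{2}$ and strictly less once you have paid for the cross term $\rho\sigma\Re\int y\,\partial_y u\,\partial_x\bar u$ (your own bound leaves only $\tfrac{\sigma^2(1-|\rho|)}{2}$). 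The inequality $\frac{(\omega-\mu)^2+\mu^2}{\mu^2}<1-|\rho|$ has no solutions, so no tuning of $(\mu,\omega)$ makes the window non-empty. Moreover, the proposed counterweight --- the positive term $\kappa m\mu\int y|u|^2\phi^2\psi^2$ --- cannot offset a negative $y^3$-term by any direct comparison, since on $\Omega$ one only has $y^3\ge a^2y$, which points the wrong way; and pushing that positive $k=1$ mass up the ladder via \eqref{eq7} makes the $y^3$-coefficient \emph{more} negative, not less.

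The missing idea is the paper's treatment of the zeroth-order term $-\omega\bigl(\kappa m+\tfrac{\sigma^2}{2}\bigr)\int y|v|^2\phi^2\psi^2$: because its coefficient is \emph{negative}, rewriting $\|\sqrt y\,v\|^2_{\phi,\psi}=-\Re\int y^2\partial_y v\,\bar v\,\phi^2\psi^2-\mu\|y^{3/2}v\|^2_{\phi,\psi}$ turns it into a \emph{positive} $y^3$-contribution $+\omega\mu\bigl(\kappa m+\tfrac{\sigma^2}{2}\bigr)\|y^{3/2}v\|^2_{\phi,\psi}$, which is the dominant positive part of the paper's $\alpha_6$. The remaining cross term $\alpha_5\Re\int y^2\partial_y v\,\bar v$ is then split with a Young parameter $\epsilon_3$, and the whole game is to choose $\epsilon_3\in\bigl(\tfrac{1}{2\mu}\cdot\overline t,\,\cdot\bigr)$ and $\omega\in(M,N)$ so that simultaneously $\alpha_2-\tfrac{\alpha_5\epsilon_3}{2}>0$ and $\alpha_6-\tfrac{\alpha_5}{2\epsilon_3}\ge0$; verifying that this window is non-empty is where the Feller condition enters quantitatively, through $(\kappa m-\tfrac{\sigma^2}{2})^2>\delta\sigma^4$ and the discriminant $\Delta_\omega$ of the quadratic in $\omega$. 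That analysis occupies most of the paper's proof and is absent from your plan, so the proposal as written does not establish the proposition.
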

\begin{proof}
%
%{\sl Variational formulation}
%
%\bigskip
%
%Given $F\in V_{\phi,\psi}^*$, find $u\in L^2(0,T,V_{\phi,\psi})
%\cap H^1(0,T,V_{\phi,\psi}^*)$ such that
%
%\[
%\Big({\frac{d }{dt}}u(t, \cdot), v\Big)_{L^2_{\phi, \psi}(\Omega)}
%+a^{\phi, \psi}_H(u(t, \cdot),v)=
%<F,v>_{(V^{\phi,\psi})^*\times V^{\phi,\psi}}
%\qquad\forall v\in V^{\phi,\psi},
%\]
%
%\[
%u(0,\cdot)=0,
%\]
%
%\bigskip
%
%where the sesquilinear form $a^{\phi, \psi}_H\,:\,
%V^{\phi,\psi}\times V^{\phi,\psi}\rightarrow \mathbb{R}$
%is defined in (\ref{eq3}).
%
%\bigskip\bigskip
%
%{\it Proof of the Garding inequality}
%
%\bigskip
%
The real part of the quadratic form $a_H^{\phi,\psi}(v,v)$ is given by
\begin{eqnarray}\label{eq8}
\Re a_H^{\phi,\psi}(v,v)&=&
{\frac{1}{2}}\int_{\Omega}y\Big|{\frac{\partial v}{\partial x}}\Big|^2
\phi^2\psi^2+
{\frac{\sigma^2}{2}}\int_{\Omega}y\Big|{\frac{\partial v}{\partial y}}\Big|^2
\phi^2\psi^2 \nonumber
\\
& & \,+\Re\left(\int_{\Omega}y\,{\frac{\partial v}{\partial x}}\overline{v}\left(\frac{\phi'}{\phi}\right)
\phi^2\psi^2\right)+
\Re \left(\int_{\Omega}\Big({\frac{1}{2}}y-\omega\rho\sigma y^2-r\Big)
{\frac{\partial v}{\partial x}}\overline{v}\phi^2\psi^2\right) \nonumber
\\
& & \,+\Re \left(\int_{\Omega}\Big({\frac{\sigma^2}{2}}-\kappa m)\Big)
{\frac{\partial v}{\partial y}}\overline{v}\phi^2\psi^2\right)
+\kappa \Re \left(\int_{\Omega}y
{\frac{\partial v}{\partial y}}\overline{v}\,\phi^2\psi^2\right) \nonumber
\\
& & \,-\omega\sigma^2
\Re \left(\int_{\Omega}y^2{\frac{\partial v}{\partial y}}\overline{v}\phi^2\psi^2\right)
+\sigma^2 \mu \Re \left(\int_{\Omega}y^2{\frac{\partial v}{\partial y}}\overline{v}
\phi^2\psi^2\right) \nonumber
\\
& & \,+\rho\,\sigma \Re \left(\int_{\Omega}y\,{\frac{\partial v}{\partial x}}
{\frac{\partial \overline{v}}{\partial y}}
\phi^2\psi^2\right)+
2\rho\sigma \Re \left(\int_{\Omega}y{\frac{\partial v}{\partial y}}\overline{v}\left(\frac{\phi'}{\phi}\right)
\phi^2\psi^2\right) \nonumber
\\
& & \,-{\frac{1}{2}}\omega^2\sigma^2\int_{\Omega} y^3 |v|^2\,\phi^2\psi^2-
(\omega \kappa m+{\frac{\omega\sigma^2}{2}})
\int_{\Omega}y\,|v|^2\,\phi^2\psi^2 \nonumber
\\
& & \,+\omega \kappa \int_{\Omega} y^2 |v|^2\,\phi^2\psi^2+
r\int_{\Omega}|v|^2\,\phi^2\psi^2 \nonumber
\\
& & \,= I_1+I_2+I_3+I_4+I_5+I_6+I_7+I_8+I_9+I_{10}+I_{11}+I_{12}+I_{13}+I_{14}.
\end{eqnarray}
By the definition of the $L^2_{\phi,\psi}$ - norm
\begin{equation}\label{eq9}
I_1+I_2={\frac{1}{2}}\Big\|\sqrt{y}{\frac{\partial v}{\partial x}}\Big\|^2_{\phi,\psi}+
{\frac{\sigma^2}{2}}\Big\|\sqrt{y}{\frac{\partial v}{\partial y}}\Big\|^2_{\phi,\psi}.
\end{equation}
To estimate the next integrals we use H\"older's and Young's
inequalities as well as integration by parts taking in mind that
$\Re \left({\frac{\partial v}{\partial x}}\overline{v}\right)={\frac{1}{2}}
{\frac{\partial |v|^2}{\partial x}}$,
$\Re \left({\frac{\partial v}{\partial y}}\overline{v}\right)={\frac{1}{2}}
{\frac{\partial |v|^2}{\partial y}}$, $\phi'=(sign\,x)\nu \phi$ and
$\psi'=\mu y \psi$.

\bigskip\noindent
$\bullet\;$
{\it Estimate of $I_3$ }:
\begin{equation}\label{eq10}
|I_3| \le{\frac{1}{2}}\epsilon_1
\Big\|\sqrt{y}{\frac{\partial v}{\partial x}}\Big\|^2_{\phi,\psi}+
{\frac{\nu^2}{2\epsilon_1}}\|\sqrt{y} v\|^2_{\phi,\psi},
\qquad \epsilon_1>0.
\end{equation}

\bigskip\noindent
$\bullet\;$
{\it Estimate of $I_4$ }:
\begin{equation}\label{eq11}
|I_4|  \le  {\frac{\nu}{2}}\|\sqrt{y} v\|^2_{\phi,\psi} +
\omega\rho\sigma\nu
\int_{\Omega}y^2 |v|^2\phi^2\psi^2
+r\nu\| v\|^2_{\phi,\psi}.
\end{equation}

\bigskip\noindent
$\bullet\;$
{\it Estimate of $I_5$ }:
\begin{equation}\label{eq12}
I_5 =\big(\kappa m -{\frac{\sigma^2}{2}}\big  )\mu
\|\sqrt{y} v\|^2_{\phi,\psi}.
\end{equation}

\bigskip\noindent
$\bullet\;$
{\it Estimate of $I_6$ and $I_{10}$ }:
\begin{equation}\label{eq13}
I_6+I_{10}\ge
-\left({\frac{\kappa}{2}}+\rho\sigma\nu\right)
\|v\|^2_{\phi,\psi}-
\left(\kappa \mu+2\rho\sigma\nu\mu\right)\int_\Omega y^2|v|^2\phi^2\psi^2.
\end{equation}

\bigskip\noindent
$\bullet\;$
{\it Estimate of $I_7$ and $I_{8}$ }:
\begin{equation}\label{eq14}
I_7+I_8=
\sigma^2(\mu-\omega)
\Re \left(\int_{\Omega}y^2{\frac{\partial v}{\partial y}}\overline{v}\phi^2\psi^2\right).
\end{equation}

\bigskip\noindent
$\bullet\;$
{\it Estimate of $I_9$ }:
\begin{equation}\label{eq15}
|I_9|\le
{\frac{1}{2}}\epsilon_2
\Big\|\sqrt{y}{\frac{\partial v}{\partial x}}\Big\|^2_{\phi,\psi}+
{\frac{\rho^2\sigma^2}{2\epsilon_2}}
\Big\|\sqrt{y}{\frac{\partial v}{\partial y}}\Big\|^2_{\phi,\psi},
\qquad \epsilon_2>0.
\end{equation}
On the other hand, it follows from (\ref{eq7}) that
\begin{equation}\label{eq16}
\|\sqrt{y} v\|^2_{\phi,\psi}=-\Re \left(\int_\Omega y^2\frac{\partial v}{\partial y}\overline{v}\phi^2\psi^2\right)
-\mu\|y^{\frac {3}{2}} v\|^2_{\phi,\psi}.
\end{equation}
It follows from (\ref{eq8})-(\ref{eq16}) that
\begin{eqnarray*}
\begin{split}
\Re a_H^{\phi,\psi}(v,v)&\ge
\alpha_1
\Big\|\sqrt{y}{\frac{\partial v}{\partial x}}\Big\|^2_{\phi,\psi}+
\alpha_2
\Big\|\sqrt{y}{\frac{\partial v}{\partial y}}\Big\|^2_{\phi,\psi}+
\alpha_3
\|v\|_{\phi,\psi}^2
\\&
+\alpha_4
\int_{\Omega}y^2 |v|^2\phi^2\psi^2
+\alpha_5
\Re \left(\int_{\Omega}y^2\,{\frac{\partial v}{\partial y}}\overline{v}
\phi^2\psi^2\right)
\\&
+\alpha_6
\|y^{\frac {3}{2}} v\|^2_{\phi,\psi},
\end{split}
\end{eqnarray*}
where

\medskip

\begin{itemize}

\medskip
\item[ ]
$\alpha_1={\frac{1}{2}}\Big(1-\epsilon_1-\epsilon_2\Big)$,

\medskip
\item[ ]
$\alpha_2={\frac{\sigma^2}{2}}\big(1-{\frac{\rho^2}{\epsilon_2}}\big)=:\frac{\sigma^2}{2}\tau$,

\medskip
\item[ ]
$\alpha_3=(-r\nu -\frac{\kappa}{2}-\rho \sigma \nu +r)$,

\medskip
\item[ ]
$\alpha_4=(\omega \kappa -\kappa \mu -\omega\rho\sigma\nu -2\rho\sigma\nu \mu)$,

\medskip
\item[ ]
$\alpha_5=
\omega\Big(\kappa m-{\frac{\sigma^2}{2}}\Big)+\sigma^2\mu +\beta -\left(\kappa m-\frac{\sigma^2}{2}\right)\mu $,

\medskip
\item[ ]
$\alpha_6=
\omega \mu \Big(\kappa m+{\frac{\sigma^2}{2}}\Big)
-\omega^2{\frac{\sigma^2}{2}}
+\mu\Big(\beta -\left(\kappa m-\frac{\sigma^2}{2}\right)\mu\Big)$

\medskip

\item[ ]
$\quad\>\,=
\mu\alpha_5+\omega\mu\sigma^2
-\sigma^2\mu^2-\omega^2{\frac{\sigma^2}{2}}$
\end{itemize}
and
\[\beta=\Big({\frac{\nu^2}{2\epsilon_1}}+{\frac{\nu}{2}}\Big).
\]

\medskip\noindent
In order to ensure that the coefficients
$\alpha_1$, $\alpha_2$ are nonnegative we use the assumption $|\rho |<1$ and we take
$\epsilon_1$ and $\epsilon_2$
such that
\[
\rho^2<\epsilon_2<1-\epsilon_1.
\]
Furthermore we take $\omega >\mu$, and
\begin{equation}\label{eq18}
\nu\le{\frac{\kappa(\omega -\mu)}{\rho\sigma(\omega+2\mu)}},
\end{equation}
when $0<\rho <1$ in order to obtain that $\alpha_4\ge0$ for any $|\rho |<1$.

To prove the lemma, we need first to show that $\Big|\int_{\Omega}y^2\,{\frac{\partial v}{\partial y}}\overline{v}
\phi^2\psi^2\Big|$  can be estimated by
$\Big\|\sqrt y{\frac{\partial v}{\partial y}}\Big\|^2_{\phi,\psi}$.
Indeed, by means of H\"older's and Young's inequalities,
\begin{equation}\label{eq19}
\begin{split}
\Big|\int_{\Omega}y^2\,{\frac{\partial v}{\partial y}}\overline{v}
\phi^2\psi^2\Big|&=
\Big|\int_{\Omega}\sqrt y\,{\frac{\partial v}{\partial y}}y^{{\frac {3}{2}}}\overline{v}
\phi^2\psi^2\Big|
\\&
\le {\frac{\epsilon_3}{2}}
\Big\|\sqrt{y}{\frac{\partial v}{\partial y}}\Big\|^2_{\phi,\psi}+
{\frac{1}{2\epsilon_3}}\|y^{{\frac {3}{2}}} v\|^2_{\phi,\psi}
\end{split}
\end{equation}
with any $\epsilon_3>0$.

On the other hand, using the assumption $\kappa m>\frac{\sigma^2}{2}$ and $\omega >\mu$, we deduce $\alpha_5>0$ and hence
$$\omega>{\frac{\Big(\kappa m-{\frac{3}{2}}\sigma^2\Big){\mu}-\beta}
{\kappa m-{\frac{\sigma^2}{2}}}}.$$ So, by (\ref{eq19}), we obtain
\begin{eqnarray}\label{eq-new}
%\begin{split}
\Re a_H^{\phi,\psi}(v,v)&\ge &
\alpha_1
\Big\|\sqrt{y}{\frac{\partial v}{\partial x}}\Big\|^2_{\phi,\psi}+
\Big(\alpha_2-\alpha_5{\frac{\epsilon_3}{2}}\Big)
\Big\|\sqrt{y}{\frac{\partial v}{\partial y}}\Big\|^2_{\phi,\psi}\nonumber
\\& &
\quad +\alpha_3
\|v\|^2_{\phi,\psi}+
\Big(\alpha_6-{\frac{\alpha_5}{2\epsilon_3}}\Big)\|y^{{\frac {3}{2}}} v\|^2_{\phi,\psi}.
%\end{split}
\end{eqnarray}
Choosing
\begin{equation}\label{eq20'}
\epsilon_3<{\frac{2\alpha_2}{\alpha_5}},
\end{equation}
we deduce that $\alpha_2-\alpha_5{\frac{\epsilon_3}{2}}>0$.\\
The next step is to prove that
\begin{equation}\label{eq21}
\alpha_6-{\frac{\alpha_5}{2\epsilon_3}}\ge 0.
\end{equation}
This is equivalent to show that $\omega$ satisfies the inequality
\begin{equation}\label{eq22}
\begin{split}
{\frac{\sigma^2}{2}}\omega^2-&
\Big[\Big(\kappa m-{\frac{\sigma^2}{2}}\Big)
\Big(\mu-{\frac{1}{2\epsilon_3}}\Big)+\sigma^2\mu\Big]\omega+
\sigma^2\mu^2 -\beta\Big(\mu-{\frac{1}{2\epsilon_3}}\Big)+
\\&
+\Big(\kappa m-{\frac{\sigma^2}{2}}\Big)
\Big(\mu-{\frac{1}{2\epsilon_3}}\Big)\mu
-\sigma^2\mu\Big(\mu-{\frac{1}{2\epsilon_3}}\Big)\le 0.
\end{split}
\end{equation}
So we need to have
$$
\Delta_{\omega}:=\Big(\kappa m-{\frac{\sigma^2}{2}}\Big)^2
\Big(\mu-{\frac{1}{2\epsilon_3}}\Big)^2+
\mu\sigma^4\Big(\mu-{\frac{1}{\epsilon_3}}\Big)+
2\beta\sigma^2\Big(\mu-{\frac{1}{2\epsilon_3}}\Big)\ge 0.
$$
Let us observe that (\ref{eq21}) can be rewritten in the following way
$$
\left(\mu-{\frac{1}{2\epsilon_3}}\right)
\alpha_5+\omega\mu\sigma^2
-\sigma^2\mu^2-\omega^2{\frac{\sigma^2}{2}}\ge 0,
$$
from which we can deduce that
$$
\epsilon_3>{\frac{1}{2\mu}},
$$
since $\omega^2{\frac{\sigma^2}{2}} -\omega\mu\sigma^2
+\sigma^2\mu^2=\frac{\sigma^2}{2}\left((\omega -\mu)^2+\mu^2\right)>0$. Thus,
\begin{equation}\label{eq25}
\Delta_{\omega}\ge0 \Longleftrightarrow
\Big(\kappa m-{\frac{\sigma^2}{2}}\Big)^2\ge
{\frac{2\epsilon_3\mu}{2\epsilon_3\mu-1}}
\left[{\frac{2-2\epsilon_3\mu}{2\epsilon_3\mu-1}}-
{\frac{2\beta}{\mu\sigma^2}}\right]\sigma^4=:
g(2\epsilon_3\mu)\sigma^4,
\end{equation}
where \[
g(t)={\frac{(2+c)t-(1+c)t^2}{(t-1)^2}}
\]
with $c={\frac{2\beta}{\mu\sigma^2}}$. On the other hand, by \eqref{feller-condition}, there exists $\delta >0$ such that $\kappa m>(1+2\sqrt{\delta})\frac{\sigma^2}{2}$. Thus, it follows that
\begin{equation}\label{eq25bis}
\left(\kappa m-\frac{\sigma^2}{2}\right)^2>\delta \sigma^4.
\end{equation}
Hence, (\ref{eq25}) holds if $g(2\epsilon_3\mu)\le \delta$. An easy computation shows that if
\begin{equation}\label{eq27}
2\epsilon_3\mu >\overline{t}:=1+\frac{1}{\sqrt{1+\delta}}
\end{equation}
then $g(2\epsilon_3\mu)< \delta$ and therefore $\Delta_\omega > 0$. On the other hand, it follows from (\ref{eq20'}) and (\ref{eq27}) that $\alpha_5<\frac{4\mu\alpha_2}{\overline{t}}$ and therefore, using (\ref{eq18}),
\begin{equation}\label{eq28}
\mu < \omega<{\frac{\Big(\kappa m-{\frac{3}{2}}\sigma^2\Big)\mu+
{\frac{4}{\overline t}}\mu\alpha_2-\beta}{\kappa m-{\frac{\sigma^2}{2}}}}=
\mu+{\frac{\gamma\sigma^2\mu-\beta}{\kappa m-{\frac{\sigma^2}{2}}}},
\end{equation}
where
$\gamma={\frac{2\tau}{\overline t}}-1$.
This implies in particular that $\gamma >0$ and
\begin{equation}\label{eq30}
\mu >\frac{\beta}{\gamma \sigma^2}.
\end{equation}
Thus, using conditions (\ref{eq18}) and (\ref{eq28}), we deduce that (\ref{eq22}) holds if $\omega \in (M,N)$, where
\[
M=\hbox{max}\left\{
{\frac{
\Big(\kappa m-{\frac{\sigma^2}{2}}\Big)
\Big(\mu-{\frac{1}{2\epsilon_3}}\Big)+\sigma^2\mu\
-{\sqrt\Delta_\omega}}
{\sigma^2}},
\mu\right\}
\]
and
\[
N=\hbox{min}\left\{
{\frac{
\Big(\kappa m-{\frac{\sigma^2}{2}}\Big)
\Big(\mu-{\frac{1}{2\epsilon_3}}\Big)+\sigma^2\mu\
+{\sqrt\Delta_\omega}}
{\sigma^2}},
\mu+{\frac{\gamma\sigma^2\mu-\beta}{\kappa m-{\frac{\sigma^2}{2}}}}
\right\}.
\]
Let us observe that
\[
\Big(\kappa m-{\frac{\sigma^2}{2}}\Big)
\Big(\mu-{\frac{1}{2\epsilon_3}}\Big)+\sigma^2\mu\
>{\sqrt\Delta_\omega}
\]
if and only if
\begin{equation}\label{eq30'}
\beta<\mu \Big(\kappa m-{\frac{\sigma^2}{2}}\Big)+
{\frac{\mu\sigma^2}{2\epsilon_3\mu-1}}.
\end{equation}
Moreover, it is easy to see that $\mu \le N$.\\
To get
\[
{\frac{
\Big(\kappa m-{\frac{\sigma^2}{2}}\Big)
\Big(\mu-{\frac{1}{2\epsilon_3}}\Big)+\sigma^2\mu\
-{\sqrt\Delta_\omega}}
{\sigma^2}}
<
\mu+{\frac{\gamma\sigma^2\mu-\beta}{\kappa m-{\frac{\sigma^2}{2}}}} 
\]
or, equivalently,
\begin{equation}\label{eq31}
\Big(\kappa m-{\frac{\sigma^2}{2}}\Big)
\Big(\mu-{\frac{1}{2\epsilon_3}}\Big)-
{\frac{\sigma^2}{\kappa m-{\frac{\sigma^2}{2}}}}
\left(\gamma\sigma^2 \mu-\beta\right)
<{\sqrt\Delta_\omega},
\end{equation}
we firstly require that
\begin{equation}\label{eq32}
\Big(\kappa m-{\frac{\sigma^2}{2}}\Big)^2\ge
{\frac{2\epsilon_3\mu}{2\epsilon_3\mu-1}}
\left(\gamma-
{\frac{\beta}{\mu\sigma^2}}\right)\sigma^4=:
f(2\epsilon_3\mu)\sigma^4
\end{equation}
to have that the left side in (\ref{eq31}) is nonnegative. \\
It follows from (\ref{eq27}) and (\ref{eq30}) that $0<f(2\epsilon_3\mu)$. Thus, from (\ref{eq25bis}) we obtain (\ref{eq32}) if
$f(2\epsilon_3\mu)\le \delta$.
From the definition of $\overline{t}$ and since $\tau <1$ one can see that $\delta >\gamma$.
Using again $\tau <1$, we obtain
$2\tau -1<1<\sqrt{1+\delta}=\frac{\delta}{\sqrt{1+\delta}}+\frac{1}{\sqrt{1+\delta}}$ and so,
\begin{eqnarray*}
\gamma -\frac{c}{2} &< & \gamma =\frac{2\tau}{\overline{t}}-1\\
&<& \frac{\delta}{\overline{t}\sqrt{1+\delta}}\\
&=& \frac{\delta}{1+\sqrt{1+\delta}}.
\end{eqnarray*}
Hence,
$$\sqrt{1+\delta}<\frac{\delta}{\gamma -(c/2)}-1.$$ This implies that 
 $\overline{t}> \frac{\delta}{\delta -\gamma +(c/2)}$. This together with (\ref{eq27}) imply that
$f(2\epsilon_3\mu)\le \delta .$ Thus, (\ref{eq32}) holds.\\
Using now the definition of $\Delta_\omega$, one can see that proving (\ref{eq31}) is equivalent to show
\begin{equation}\label{eq33}
\Big(\kappa m-{\frac{\sigma^2}{2}}\Big)^2>
{\frac{2\epsilon_3\mu}{2\epsilon_3\mu(1+2\gamma)-
(2+2\gamma)}}
\left(\gamma-
{\frac{\beta}{\mu\sigma^2}}\right)^2\sigma^4=
{\tilde f}(2\epsilon_3\mu)\sigma^4.
\end{equation}
Since $\overline{t}<2$ and
$$\overline{t}<\inf_{\gamma \in (0,\frac{2}{\overline{t}}-1)}\left(1+\frac{1}{1+2\gamma}\right)=\frac{4}{4-\overline{t}},$$
one deduces that ${\tilde f}(2\epsilon_3\mu)<0$ and hence (\ref{eq33}) holds, provided that
$$
\overline{t}<2\epsilon_3\mu < 1+\frac{1}{1+2\gamma}.
$$
Therefore, if $\omega \in (M,N),\, \nu$ satisfies (\ref{eq18}) when $\rho >0$, and
$$\beta <\min\left\{\mu \gamma \sigma^2,\mu\left(\kappa m-\frac{\sigma^2}{2}\right)+\frac{\mu \sigma^2}{2\epsilon_3\mu -1}\right\}$$
from (\ref{eq30}) and (\ref{eq30'}), with $0<\gamma <\delta$. Then \eqref{eq-new} can be written as
$$\Re a_H^{\phi ,\psi}(v,v)\ge c_1\|v\|_{V_{\phi ,\psi}}+\alpha_3\|v\|^2_{\phi ,\psi},\quad \forall v\in C_c^\infty(\Omega),$$
provided that
$$ \frac{2\rho^2}{2-\overline{t}}<\epsilon_2<1-\epsilon_1  \hbox{\ and }
\epsilon_3\in\left({\frac{\overline t}{2\mu}},
\hbox{min}\left\{
{\frac{2\alpha_2}{\alpha_5}},
{\frac{1}{2\mu}}
\left(1+{\frac{1}{1+2\gamma}}\right)\right\}\right),
$$
where $c_1:=\min\{\alpha_1,\alpha_2-\alpha_5\frac{\epsilon_3}{2},a^3(\alpha_6-\frac{\alpha_5}{2\epsilon_3})\}>0$.
We note that the above first inequality satisfied by $\epsilon_2$ is a consequence of $\gamma >0$. On the other hand, by assuming $|\rho|<\sqrt{\frac{1}{2}-\frac{1}{2\sqrt{1+\delta}}}$, there exists a $\epsilon_1$ satisfying the above condition, since
$\sqrt{\frac{1}{2}-\frac{1}{2\sqrt{1+\delta}}}=\sqrt{\frac{2-\overline{t}}{2}}.$
\end{proof}
\begin{remark}\label{r1}
It follows from Lemma \ref{l1} and Proposition \ref{p1} that the form norm defined by $$\|u\|_{a_H}:=\sqrt{\Re a_H^{\phi ,\psi}(u,u)+(1-c_2)\|u\|_{\phi ,\psi}},$$
is equivalent to the norm $\|\cdot \|_{V_{\phi ,\psi}}$. So, by Lemma \ref{l0}, the sesquilinear form $a_H^{\phi ,\psi}$ with domain $V^0_{\phi ,\psi}$ is closed.
\end{remark}

We define the operator associated to $a_H^{\phi ,\psi}$ by
\begin{eqnarray*}
D(A) &=& \{u\in V^0_{\phi ,\psi}\,s.t.\,\exists v\in L^2_{\phi ,\psi}(\Omega): a_H^{\phi ,\psi}(u,\varphi)=\int_\Omega v\overline{\varphi}\phi^2\psi^2,\,\forall \varphi \in C_c^\infty(\Omega)\} \\
Au &=& v.
\end{eqnarray*}

The estimate \eqref{Garding} is known as Garding's inequality. Applying \cite[Section 4.4, Theorem 4.1]{LM} we obtain the existence of a unique weak solution to the problem \eqref{eq2}.
\begin{theorem}
Assume the same conditions as in Proposition \ref{p1}. Then, there is a unique weak solution $u\in L^2([0,T],V^0_{\phi ,\psi})\cap C([0,T],L^2_{\phi ,\psi}(\Omega))$ to the parabolic problem \eqref{eq2}.
\end{theorem}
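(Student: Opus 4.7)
The plan is to apply the classical Lions–Magenes variational theory for abstract evolution equations (exactly as cited, [LM, Section 4.4, Theorem 4.1]) to the Cauchy problem $u'(t) + Au(t) = F(t)$, $u(0) = 0$, on the Gelfand triple $V^0_{\phi,\psi} \hookrightarrow L^2_{\phi,\psi}(\Omega) \hookrightarrow (V^0_{\phi,\psi})^*$, where $A$ is the operator associated with the sesquilinear form $a_H^{\phi,\psi}$. The ingredients are already in hand: the embedding $V^0_{\phi,\psi} \hookrightarrow L^2_{\phi,\psi}(\Omega)$ is continuous and dense (density follows from Lemma \ref{l0}(a) together with $C_c^\infty(\Omega)\subset V^0_{\phi,\psi}$), the form is continuous by Lemma \ref{l1}, satisfies a Gårding inequality by Proposition \ref{p1}, and is closed by Remark \ref{r1} with form norm equivalent to $\|\cdot\|_{V_{\phi,\psi}}$.

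First, I would dispose of the constant $c_2$ appearing in \eqref{Garding}: if $c_2\le 0$ one performs the standard substitution $u(t)=e^{\lambda t}\tilde u(t)$ with $\lambda>-c_2$, which transforms the Cauchy problem into one whose governing sesquilinear form $a_H^{\phi,\psi}(\cdot,\cdot)+\lambda\langle\cdot,\cdot\rangle_{\phi,\psi}$ is strictly coercive on $V^0_{\phi,\psi}$; the source is merely multiplied by $e^{-\lambda t}$ and thus retains its regularity. Next I would verify that the right-hand side defines an element of $L^2(0,T;(V^0_{\phi,\psi})^*)$. Since $F(t,y)=\tfrac{K}{2}ye^{-rt}e^{-\omega y^2/2}\delta_{\ln K-rt}$ acts on a test function $v$ by
\[
\langle F(t),v\rangle = \tfrac{K}{2}e^{-rt}\int_a^\infty y\, e^{-\omega y^2/2}\, v(\ln K - rt, y)\,dy,
\]
I would invoke the trace theorem in the $x$-variable: for fixed $y$, $v(\cdot,y)\in H^1(\mathbb R)$ after multiplication by $\phi$, so evaluation at $x=\ln K-rt$ is controlled by $\|v\|_{V_{\phi,\psi}}$. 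The Gaussian factor $e^{-\omega y^2/2}$ absorbs the weight $\psi=e^{\mu y^2/2}$ as soon as $\omega>\mu$ (which is part of the hypotheses of Proposition \ref{p1}), yielding a uniform bound $\|F(t)\|_{(V^0_{\phi,\psi})^*}\le C$ on $[0,T]$.

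With coercivity, continuity and the source regularity in place, Lions' theorem produces a unique $u\in L^2(0,T;V^0_{\phi,\psi})$ with distributional derivative $u'\in L^2(0,T;(V^0_{\phi,\psi})^*)$ solving the variational equation, and the standard intermediate-regularity embedding $L^2(0,T;V^0_{\phi,\psi})\cap H^1(0,T;(V^0_{\phi,\psi})^*)\hookrightarrow C([0,T];L^2_{\phi,\psi}(\Omega))$ delivers the asserted continuity in time and makes sense of the initial condition $u(0)=0$. I expect the only genuinely delicate point to be the verification that the singular source $F$ lies in the dual of $V^0_{\phi,\psi}$: this is where one has to exploit the interplay between the Gaussian decay of $F$ in $y$ and the admissible range of the weight parameter $\mu$, together with a one-dimensional trace argument in $x$; everything else is a direct invocation of previously established estimates and abstract theory.
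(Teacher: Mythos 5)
Your proposal is correct and follows exactly the route the paper takes: the paper's entire ``proof'' is the observation that \eqref{Garding} is a G\aa rding inequality followed by a citation of \cite[Section 4.4, Theorem 4.1]{LM}, with continuity and closedness of the form supplied by Lemma \ref{l1} and Remark \ref{r1}. In fact you supply more detail than the paper does (the exponential shift to remove $c_2$, and the verification that the Dirac source $F$ lies in $L^2(0,T;(V^0_{\phi,\psi})^*)$ via the trace in $x$ and the condition $\omega>\mu$ — though note the duality pairing should carry the weights $\phi^2\psi^2$, which does not change your conclusion); these are points the paper leaves entirely implicit.
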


Applying the Lumer-Phillips theorem we obtain the following generation result.
\begin{theorem}
Assume the same conditions as in Proposition \ref{p1}. Then, the operator $-A$ defined above generates a positivity preserving and quasi-contractive analytic semigroup on $L^2_{\phi ,\psi}(\Omega)$.
\end{theorem}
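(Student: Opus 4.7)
The plan is to deduce both analyticity and positivity from the properties of the form $a_H^{\phi,\psi}$ established above, rather than by a direct dissipativity calculation on the generator. By Lemma \ref{l0}, the form $(a_H^{\phi,\psi}, V_{\phi,\psi}^0)$ is densely defined on $L^2_{\phi,\psi}(\Omega)$; by Remark \ref{r1} it is closed; by Lemma \ref{l1} it is continuous; and by Proposition \ref{p1} the shifted form $a_H^{\phi,\psi} + (1-c_2)(\cdot,\cdot)_{\phi,\psi}$ is coercive on $V_{\phi,\psi}^0$. Continuity plus coercivity force sectoriality of the shifted form, with sector opening $\arctan(M/c_1) < \pi/2$, since
\[
|\Im a_H^{\phi,\psi}(v,v)| \leq M\|v\|_{V_{\phi,\psi}}^2 \leq \frac{M}{c_1}\bigl(\Re a_H^{\phi,\psi}(v,v) + (1-c_2)\|v\|_{\phi,\psi}^2\bigr).
\]
The form version of the Lumer--Phillips theorem (see e.g.\ Kato or Ouhabaz) then yields that $-A$ generates an analytic $C_0$-semigroup on $L^2_{\phi,\psi}(\Omega)$ satisfying $\|e^{-tA}\| \leq e^{-c_2 t}$, i.e.\ quasi-contractivity.

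For positivity I would apply the first Beurling--Deny criterion, which reduces to verifying (i) $u \mapsto u^+$ maps $V_{\phi,\psi}^0$ into itself on real-valued functions, and (ii) $\Re a_H^{\phi,\psi}(u^+, u^-) \leq 0$ for every real $u \in V_{\phi,\psi}^0$. Condition (ii) is essentially automatic: in the expression obtained after integration by parts in the proof of Lemma \ref{l1}, every term is a product involving at least one factor from $\{u^+, \partial_x u^+, \partial_y u^+\}$ together with at least one from $\{u^-, \partial_x u^-, \partial_y u^-\}$. Stampacchia's lemma gives $\partial_x u^+ = \chi_{\{u>0\}}\partial_x u$ and $\partial_x u^- = -\chi_{\{u<0\}}\partial_x u$ (analogously for $\partial_y$), and $u^+u^- = 0$, so every such product vanishes almost everywhere; hence $a_H^{\phi,\psi}(u^+, u^-) = 0$, and (ii) holds with equality.

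The main obstacle is (i), the lattice stability of the closed space $V_{\phi,\psi}^0$. Stampacchia's chain rule gives $\|u^+\|_{V_{\phi,\psi}} \leq \|u\|_{V_{\phi,\psi}}$, so $u^+ \in V_{\phi,\psi}$; the delicate point is preservation of the Dirichlet-type condition encoded in the closure. My plan is to approximate $u^+$ by the smooth Lipschitz truncations $G_\varepsilon(u) := \sqrt{u^2+\varepsilon^2}-\varepsilon$, which satisfy $G_\varepsilon(0)=0$ and, by dominated convergence in the weighted norm, converge to $u^+$ in $V_{\phi,\psi}$ as $\varepsilon\to 0$. If $\varphi_n\in C_c^\infty(\Omega)$ with $\varphi_n\to u$ in $V_{\phi,\psi}$, then $G_\varepsilon(\varphi_n)$ is Lipschitz with compact support and, after the mollifier step of Lemma \ref{l0}(a) (which carries over since $\phi,\psi$ are smooth and locally bounded away from zero and infinity on $\Omega$), approximates $G_\varepsilon(u)$ in $V_{\phi,\psi}^0$. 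A diagonal argument then yields $u^+ \in V_{\phi,\psi}^0$, completing the verification and hence the theorem.
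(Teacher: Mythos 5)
Your argument follows essentially the same route as the paper: generation of a quasi-contractive analytic semigroup from the densely defined, closed, continuous, quasi-accretive (sectorial) form via Lemma \ref{l0}, Lemma \ref{l1}, Remark \ref{r1} and Proposition \ref{p1}, and positivity via the first Beurling--Deny criterion with $a_H^{\phi,\psi}(u^+,u^-)=0$; your only addition is to spell out the lattice stability $u^+\in V^0_{\phi,\psi}$, which the paper delegates to \cite[Proposition 4.4]{Ouhabaz}. One small slip there: $\sqrt{u^2+\varepsilon^2}-\varepsilon$ converges to $|u|$, not $u^+$, but this is harmless since $u^+=\tfrac{1}{2}(u+|u|)$.
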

\begin{proof}
It follows form Lemma \ref{l0}, Lemma \ref{l1}, Proposition \ref{p1} and Remark \ref{r1} that the form $a_H^{\phi ,\psi}$ with domain $V^0_{\phi ,\psi}$ is densely defined, closed, continuous and quasi-accretive sesquilinear form on $L^2_{\phi ,\psi}(\Omega)$. Thus, $-A$ generates a quasi-contractive analytic semigroup $(e^{-tA})_{t\ge 0}$ on $L^2_{\phi ,\psi}(\Omega)$ (cf. \cite[Theorem 1.52]{Ouhabaz}).

For the positivity, we note first that the semigroup $(e^{-tA})_{t\ge 0}$ is real and one can see that for every
$u\in D(a_H^{\phi ,\psi})\cap L^2_{\phi ,\psi}(\Omega ,\R),\,u^+ \in D(a_H^{\phi ,\psi})$ and $a_H^{\phi ,\psi}(u^+,u^-)=0$,
since $u^-=(-u)^+$ and $\nabla u^+=\chi_{\{u>0\}}\nabla u$ (cf. \cite[Proposition 4.4]{Ouhabaz}). Thus, by the first Beurling-Deny criteria, $(e^{-tA})_{t\ge 0}$ is a positivity preserving semigroup on $L^2_{\phi ,\psi}(\Omega)$ (cf. \cite[Theorem 2.6]{Ouhabaz}).

\end{proof}

%If $\kappa m>\frac{\sigma^2}{2}$ then there exists $\delta >0$ such that $\kappa m>(1+2\sqrt{\delta})\frac{\sigma^2}{2}$. Thus, in the correlated case, one obtains the following result.
%\begin{corollary}\label{c1}
%Assume that $\kappa m>\frac{\sigma^2}{2}$ and $\rho=0$. Then, under appropriate conditions on $\nu ,\,\mu$ and $\omega$, there is a constant $C\in \R$ such that
%$$\Re a_H^{\phi ,\psi}(v,v)\ge C\|v\|_{\phi ,\psi}^2,\quad \forall v\in C_c^\infty(\Omega).$$
%\end{corollary}

%\appendix
%\section{ }

%This appendix contains all technical results that we need

\end{document}